\newtheorem{theorem}{Theorem}
\newtheorem{lemma}[theorem]{Lemma}
\newtheorem{proposition}[theorem]{Proposition}
\newtheorem{corollary}[theorem]{Corollary}
\def\Pr#1{{\mathbf P\bigl[{#1}\bigr]}}
\def\cond{{ \; \big\vert \; }}
\def\Sum{\displaystyle\sum}
\def\bar{\overline}
\def\D{{\mathcal D}}
\def\Nat{{\mathbb N_0}}
\def\C{{\mathcal C}}
\def\T{{\mathcal T}}
\def\tecka{{\;\mbox{.}}}
\def\carka{{\;\mbox{,}}}
\def\softl{{l\kern-0.3ex\raise0.1ex\hbox{'}\kern-0.10ex}}
\def\iuuk{{Computer Science Institute}}
\def\mffuk{{Faculty of Mathematics and Physics, Charles University}}
\def\addrms{{Ma\-lo\-stran\-sk\'e n\'am\-\v es\-t\'\i{} 25, 118 00 Prague 1, Czech Republic}}
\def\Ggacrnum{{GACR 201/09/0197}}
\def\Ggauknum{{GAUK 601812}}
\def\Giti{{Institute for Theoretical Computer Science of Charles University (ITI), which was supported as project 1M0545 by Czech Ministry of Education.}}
\def\cutprob{{0.88672}}
\def\cutsize{{1.33008}}
\def\cutcover{{1.127752}}
\def\cutred{{0.491979}}
\def\cutblue{{0.508021}}
\def\cutgirth{{637\,789}}
\def\cutzyka{{1.28571}}
\def\cutwormald{{1.32595}}
\def\cutupper{{0.9351}}
\def\cutuppern{{1.4026}}
\def\cutprgPred{{2^{-11}}}
\def\cutprgPblue{{1}}
\def\cutprgPrb{{2^{-17}}}
\def\cutprgPinit{{2^{-18}}}
\def\cutprgKone{{34\,919}}
\def\cutprgKtwo{{283\,974}}
\def\cutprgK{{318\,894}}
\def\cutprgTHOLD{{10^{-7}}}
\def\cutprgprec{{657400}}
\def\cutprgerrfla{{2^{-657400} \times 10^{197862}}}
\def\cutprgerr{{10^{-35}}}
\def\cutzdebup{{1.386}}
\def\cutzdeblow{{1.382}}
\def\cutindep{{1.3056}}
\def\randsize{{10^7}}
\def\randprob{{0.88696}}
\begin{document}

\title{Maximum edge-cuts in~cubic graphs with large girth and in~random cubic graphs}

\date{}

\author{
Franti\v sek Kardo\v s
\,\thanks{\,Institute of Mathematics, Faculty of Science, University of Pavol Jozef \v Saf\'arik,
Jesenn\'a 5, 041 54 Ko\v sice, Slovakia. 
This author was supported by Slovak Research and Development Agency under the contract no. APVV-0023-10. E-mail: {\tt frantisek.kardos@upjs.sk}.}
\and Daniel Kr\'a\softl
\,\thanks{\,\iuuk, \mffuk, \addrms. This work was done when the author was affiliated with {\Giti} This author was also supported by the grant \Ggacrnum.
E-mail: {\tt kral@iuuk.mff.cuni.cz}.}
\and Jan Volec
\,\thanks{\,\iuuk, \mffuk, \addrms. This author was supported by the grants {\Ggacrnum} and {\Ggauknum}. E-mail: {\tt volec@iuuk.mff.cuni.cz}.}
}

\maketitle

\begin{abstract}
We show that for every cubic graph $G$ with sufficiently large girth there exists a probability
distribution on edge-cuts in $G$ such that each edge is in a randomly chosen cut with probability
at least $\cutprob$.
This implies that $G$ contains an edge-cut of size at least $\cutsize n$,
where $n$ is the number of vertices of $G$, and has fractional cut covering
number at most $\cutcover$. The lower bound on the size of maximum edge-cut
also applies to random cubic graphs. Specifically, a random $n$-vertex cubic
graph a.a.s. contains an edge cut of size $\cutsize n$.
\end{abstract}

\section{Introduction}
\label{sect-cut-intro}
An~{\em edge-cut} in a~graph~$G=(V,E)$ defined by $X \subseteq V$ is the~set
of edges with exactly one end vertex in $X$ (and exactly one end vertex in $V \setminus X$).
A {\em maximum edge-cut} is an~edge-cut with the~maximum number of edges.
The size of a maximum edge-cut is an important graph parameter intensively
studied both in structural and algorithmic graph theory. From the algorithmic point of view, it
attracted a lot of attention because of an approximation algorithm based on the
semidefinite programming by Goemans and Williamson~\cite{bib-semidef} which achieves 
the~best possible approximation ratio under reasonable computational complexity
assumptions~\cite{bib-cut-inapprox}.
More specifically, assuming that the Unique Games Conjecture of Koth~\cite{bib-khot} holds, 
it is NP-hard to approximate the size of a maximum edge-cut in a graph $G$ within any 
factor greater than the approximation factor of the Goemans-Williamson algorithm.
On the other hand, there exists a polynomial-time algorithm for finding a 
maximum edge-cut in planar graphs~\cite{bib-hadlock}, and more generally in graphs
embeddable in a fixed orientable surface~\cite{bib-loebl}.
In this paper, we provide new structural results on maximum cuts in cubic graphs,
i.e., graphs with all vertices of degree three.

We prove a new lower bound on the size of a maximum edge-cut in a cubic graph
with no short cycle and in a random cubic graph.
Let us now mention earlier results. In 1990,
Z{\'y}ka~\cite{bib-zyka} proved that the~size of~the~maximum edge-cut in~cubic graphs
with large girth is at~least $9n/7 - o(n) = \cutzyka n - o(n)$.
A better bound $\cutindep n$ can be obtained from a recent result~\cite{bib-ind-girth}
on independent sets in cubic graphs with large girth.
The~asymptotic lower bound for a~maximum edge-cut in random cubic graphs of
$\cutwormald n$ was given by D{\' i}az, Do, Serna and
Wormald~\cite{bib-wormald-cut}. The experimental evidence suggests that almost all
$n$-vertex cubic graphs contain an edge-cut of size at least $\cutzdeblow n$~\cite{bib-zdeborova-private}.
On the~other hand, the~best known upper bound
is $\cutupper m = \cutuppern n$ which applies both to random cubic graphs and
cubic graphs with large girth. The~upper bound was announced by McKay~\cite{bib-mckay-cut},
its rigorous proof can be found in~\cite{bib-hh}. 
The problem could also be translated to a problem in statistical physics and applying non-rigorous
methods suggests that the size of a maximum edge-cut for almost all $n$-vertex graphs is at most
$\cutzdebup n$~\cite{bib-zdeborova}.

The problems of determining the size of a maximum edge-cut in random cubic graphs (more generally in random regular
graphs) and in cubic (regular) graphs with large girth are closely related.
On one hand, Wormald showed in~\cite{bib-wormald-shortcycles} that a~random
cubic graph asymptotically almost surely (a.a.s.) contains only $o(n)$ cycles
shorter than a~fixed integer $g$. Therefore, we can a.a.s. remove a~small
number (which means $o(n)$) of vertices to obtain a~subgraph with large girth
and only $o(n)$ vertices of degree less than three.

On the~other hand, Hoppen and Wormald~\cite{bib-hoppen-private}
have recently developed a~technique for translating many results for random
$r$-regular graphs to $r$-regular graphs with sufficiently large girth.
In particular, they are able to translate bounds obtained by analyzing
the~performance of so-called locally greedy algorithms for a~random regular 
graphs. These algorithms and their analysis provide the~currently best known
asymptotic bounds to many parameters of random regular graphs, for example
an~upper bound on~the~size of~the~smallest dominating set~\cite{bib-wormald-dom}.
The~main tool for the~analysis of such algorithms as well as for analysis of many other random
processes is the~{\em differential equation method} developed by~Wormald~\cite{bib-wormald}.

Bounds on maximum edge-cuts are closely related to the concept of fractional cut coverings.
A~{\em fractional cut covering} of a~graph $G$ is a parameter analogous to a~fractional coloring of $G$.
It was first introduced by {\v S}{\' a}mal~\cite{bib-samal} under the name cubical colorings;
he also related this parameter to graph homomorphisms.
These ideas are further developed in~\cite{bib-samal-endm, bib-samal-arxiv}.
The~aim is to assign non-negative weights to edge-cuts in $G$ in such a way that for 
each edge~$e$ of $G$ the~sum of weights of the~cuts containing $e$ is at least one. 
The~{\em fractional cut covering number} is the~minimum sum of weights of cuts 
forming a~fractional cut covering. Our approach in this paper gives also an upper bound
for the~fractional cut covering number of cubic graphs with sufficiently large girth.

\section{New results}
\label{sect-cut-thm}

The~main result of this paper is the~following.
\begin{theorem}
\label{thm-cut-dist}
There exists an absolute constant $g_0$ such that the following holds. If $G$
is a~cubic graph with girth at least $g_0$, then there exists a~probability
distribution on edge-cuts in $G$ such that each edge of $G$ is contained in
an~edge-cut drawn according to this distribution with probability at~least~$\cutprob$.
\end{theorem}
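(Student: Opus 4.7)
The plan is to design a randomized local coloring algorithm $\mathcal{A}$ which assigns each vertex of $G$ a color in $\{\text{red},\text{blue}\}$, such that the output at each vertex depends only on an $O(1)$-radius neighborhood of that vertex. The edge-cut we output is then the set of bichromatic edges under $\mathcal{A}$. If $g_0$ exceeds twice the dependence radius, then for every edge $uv$ the relevant neighborhood is a subtree of the infinite $3$-regular tree $T_3$, so the probability that $uv$ is bichromatic in $G$ matches the corresponding probability on $T_3$. Establishing the theorem therefore reduces to constructing and analyzing $\mathcal{A}$ on~$T_3$.

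The specific algorithm I would use is iterative and fits the parameters declared in the preamble. In a preliminary step each vertex independently becomes a ``red seed'' with probability~$\cutprgPinit$. Then, for $\cutprgK$ rounds (split into an initial phase of length $\cutprgKone$ and a second phase of length $\cutprgKtwo$), the state of each vertex is updated as a probabilistic function of the current states of the vertex and its three neighbors, using the edge-type transition probabilities $\cutprgPred$, $\cutprgPblue$, and $\cutprgPrb$; any vertices still undecided at the end are colored by a deterministic rule. Because $T_3$ is vertex-transitive and the update rule is local, after every round the state of a vertex is governed by a finite-dimensional distribution over its pendant subtrees, and the subtrees hanging off the two endpoints of an edge are independent conditional on earlier states. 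Iterating the corresponding deterministic recurrence $\cutprgK$ times on these branch-distributions yields the marginal color probabilities ($\cutred$ for red and $\cutblue$ for blue) and, crucially, the probability that any fixed edge is bichromatic.

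Once the algorithm is designed, assuming the iteration converges to a probability of at least $\cutprob + \cutprgTHOLD$ on $T_3$, the theorem follows by choosing $g_0 > 2\cutprgK + 2$ and identifying the probability distribution on cuts in $G$ with the law of the bichromatic edge set under $\mathcal{A}$. Each edge is then bichromatic with the same probability as on the tree, giving the desired bound.

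The main obstacle is the rigorous certification of the numerical bound $\cutprob$. The recurrence on $T_3$ involves probabilities that can be as small as $2^{-\cutprgK}$, so a direct floating-point evaluation will not do; the computation has to be carried out in interval or rational arithmetic at precision on the order of $\cutprgprec$ bits, with total accumulated error well below $\cutprgerr$. In addition, the parameters $\cutprgPinit$, $\cutprgPred$, $\cutprgPblue$, $\cutprgPrb$ and the iteration counts $\cutprgKone, \cutprgKtwo$ must be chosen to (nearly) optimize the edge cut probability; this is a finite but nontrivial numerical search. Once acceptable parameters are found, the proof becomes a large but finite certified computation, together with the tree-to-graph reduction above.
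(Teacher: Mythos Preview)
Your proposal is essentially the paper's approach: a local randomized $K$-round red/blue/white coloring, analyzed on the infinite cubic tree via recurrences whose tractability rests on an independence property between the two subtrees hanging off a white--white edge, followed by a high-precision certified numerical evaluation with the exact parameters the paper uses.

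Two minor discrepancies in your description are worth flagging. First, the initial round seeds vertices symmetrically in both colors (each with probability $p_0/2$), not only red; and there is no deterministic clean-up at the end---the parameters are chosen so that the white mass after $K$ rounds is below $\cutprgTHOLD$, and the bound $p_K>\cutprob$ already absorbs that residue. Second, your independence assertion (``the subtrees \ldots\ are independent conditional on earlier states'') is the crux and deserves a precise formulation and proof: what one needs, and what the paper establishes as its Independence Lemma, is that \emph{conditioned on both endpoints of the edge being white after round $k$}, the colorings of the two pendant depth-$(K-k)$ subtrees are independent. This is not automatic (the conditioning couples the two sides a priori) and is proved by induction on $k$; without it the recurrences on the finite state vector $(w_k^{r,b})$ do not close.
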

Proof of the Theorem~\ref{thm-cut-dist} actually provides that $g_0 \le \cutgirth$.

Before presenting the~proof of Theorem~\ref{thm-cut-dist}, let us state four corollaries of this theorem.
First, by considering the~expected size of an~edge-cut drawn according to the~distribution from Theorem~\ref{thm-cut-dist},
we get the~following.
\begin{corollary}
\label{cor-cut-size}
There exists an absolute constant $g_0$ such that 
every $n$-vertex cubic graph with girth at least $g_0$ contains an~edge-cut of size at least $\cutsize n$.
\end{corollary}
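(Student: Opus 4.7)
The plan is to apply Theorem~\ref{thm-cut-dist} and then extract a single good cut by a first-moment argument. Let $G$ be an $n$-vertex cubic graph with girth at least $g_0$, and let $\mu$ denote the probability distribution on edge-cuts guaranteed by Theorem~\ref{thm-cut-dist}. Since $G$ is $3$-regular, the number of edges equals $3n/2$.

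Let $C$ be a random edge-cut drawn from $\mu$, and for each edge $e$ let $X_e$ be the indicator of the event $e\in C$. By linearity of expectation,
\[
\mathbf{E}[|C|] \;=\; \sum_{e\in E(G)} \Pr{e\in C} \;\geq\; |E(G)|\cdot \cutprob \;=\; \frac{3n}{2}\cdot \cutprob \;=\; \cutsize\, n,
\]
using the numerical identity $\tfrac{3}{2}\cdot \cutprob = \cutsize$.

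Since the expected size of $C$ is at least $\cutsize\,n$, there must exist a realization of $C$, i.e.\ an actual edge-cut in $G$, whose size is at least $\cutsize\,n$, which is the desired conclusion. There is no real obstacle: the only thing to check is the arithmetic $\tfrac{3}{2}\cdot 0.88672 = 1.33008$, and the constant $g_0$ is the same as the one provided by Theorem~\ref{thm-cut-dist}.
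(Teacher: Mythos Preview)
Your proof is correct and matches the paper's approach exactly: the paper states this corollary as an immediate consequence of Theorem~\ref{thm-cut-dist} by considering the expected size of a random edge-cut, which is precisely your first-moment argument.
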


We can also translate Theorem~\ref{thm-cut-dist} to subcubic graphs with large girth.
\begin{corollary}
\label{cor-cut-subcubic}
There exists an absolute constant $g_0$ such that the following holds.
If $G$ is a graph with maximum degree at most three and girth at least $g_0$, then there exists a~probability
distribution on edge-cuts in $G$ such that each edge of $G$ is contained in
an~edge-cut drawn according to this distribution with probability at~least~$\cutprob$.
In particular, $G$ contains an~edge-cut of~size at~least~$\cutprob m$, where $m$ is the number of edges of $G$.
\end{corollary}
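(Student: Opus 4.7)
The plan is to reduce the corollary to Theorem~\ref{thm-cut-dist} by embedding the subcubic graph $G$ as a subgraph of a cubic graph $G^\star$ of girth at least $g_0$. To construct $G^\star$, let $D \subseteq V(G)$ denote the set of vertices of degree strictly less than three and set $\delta_v := 3 - d_G(v)$ for $v \in D$. Since the total deficiency $\sum_{v \in D} \delta_v = 3|V(G)| - 2|E(G)|$ must be even, if necessary I would first pass to the disjoint union of two copies of $G$ (which still has girth at least $g_0$). Then I would take sufficiently many disjoint copies of $G$ and match the resulting free half-edges at deficient vertices by a carefully chosen matching $M$ such that (i) matched half-edges lie in distinct copies of $G$, and (ii) the edges of $M$ do not create any cycle of length less than $g_0$. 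For a sufficiently spread-out matching, possibly aided by padding with an auxiliary cubic graph of girth greater than $g_0$ furnished by standard high-girth constructions, every cycle of the resulting cubic graph $G^\star$ either lies entirely inside one copy of $G$ (and hence has length at least $g_0$) or uses enough matching edges that its length is at least $g_0$.

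With $G^\star$ in hand, Theorem~\ref{thm-cut-dist} yields a probability distribution $\D^\star$ on edge-cuts of $G^\star$ such that every edge of $G^\star$ lies in a random cut with probability at least $\cutprob$. I would sample $C^\star \sim \D^\star$, say $C^\star$ is defined by $X^\star \subseteq V(G^\star)$, and set $C := C^\star \cap E(G)$. Then $C$ is precisely the edge-cut of $G$ defined by $X^\star \cap V(G)$, so this gives a probability distribution on edge-cuts of $G$. Moreover, for each $e \in E(G) \subseteq E(G^\star)$ one has $\Pr{e \in C} = \Pr{e \in C^\star} \ge \cutprob$, which is the claimed distributional property. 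Taking expectations, the expected size of $C$ equals $\sum_{e \in E(G)} \Pr{e \in C} \ge \cutprob \cdot m$, so $G$ contains an edge-cut of size at least $\cutprob m$.

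The main obstacle is the construction of $G^\star$, namely the certification that every subcubic graph of girth at least $g_0$ embeds into a cubic graph of the same girth. Parity of the total deficiency is easily handled by doubling $G$ if needed, but arranging the matching of half-edges so that no short cycle appears requires some care, particularly when the deficient vertices of $G$ are clustered. Working with many disjoint copies and distributing the matching edges in a spread-out greedy fashion, possibly together with a padding cubic graph of girth greater than $g_0$, resolves this difficulty and completes the reduction to Theorem~\ref{thm-cut-dist}.
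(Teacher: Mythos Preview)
Your high-level strategy---embed $G$ as a subgraph of a cubic graph of girth at least $g_0$, apply Theorem~\ref{thm-cut-dist} there, and restrict the random cut to $E(G)$---is exactly the paper's approach, and your restriction step (taking $X^\star\cap V(G)$ and computing the expected cut size) is carried out correctly. The only place your proposal is incomplete is the one you yourself flag as ``the main obstacle'': the actual construction of the cubic supergraph.

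The paper bypasses your greedy-matching/padding discussion with a clean one-line device that resolves this obstacle completely. Let $n_1,n_2$ be the numbers of vertices of $G$ of degree $1$ and $2$; take any $(2n_1+n_2)$-regular graph $R$ of girth at least $g_0$ (such graphs exist by classical results on random regular graphs), and replace every vertex of $R$ by a copy of $G$, attaching the $2n_1+n_2$ incident $R$-edges to the $2n_1+n_2$ free half-edges in that copy. The resulting graph $H$ is cubic, and it has girth at least $g_0$: a cycle of $H$ either lies in one copy of $G$, or its $R$-edges form a non-empty even-degree subgraph of $R$ and hence contain a cycle of $R$, so there are at least $g_0$ of them. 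This removes any need for parity fixes, spread-out greedy arguments, or auxiliary padding graphs; with that construction in place, the rest of your proof goes through verbatim.
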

\begin{proof}
Fix $g_0$ to be the constant given by Theorem~\ref{thm-cut-dist}, and let $n_1$
and $n_2$ be the numbers of vertices of $G$ with degree one and two,
respectively. Clearly, we may assume that $G$ has no isolated vertices.
Let $R$ be a~$(2n_1 + n_2)$-regular graph with girth at least
$g_0$. There exists such a graph, since a random cubic graph has with positive probability
girth at least $g_0$ for every fixed value of $g_0$, which was proven by Bollob\'as~\cite{bib-bollob}
and independently by Wormald~\cite{bib-wormald-shortcycles}.
Replace each vertex of $R$ with a~copy of $G$ in such a~way that
the~edges of~$R$ are incident with vertices of degree one and two in the~copies
of~$G$ and the~resulting graph is cubic. Observe that the~obtained graph $H$
has girth at least $g_0$. 

Consider the probability distribution $\D$ given by Theorem~\ref{thm-cut-dist} on
edge-cuts in $H$ and fix an arbitrary copy $G'$ of the graph $G$ in $H$. For
every edge-cut $C \subseteq E(G)$ in $G$, we set the probability $p(C)$ to be
the probability that the edge-cut in $G'$ induced by a random edge-cut in $H$
drawn according to $\D$ is equal to $C$. This yields a probability distribution
on edge-cuts in $G$ with the required property.
%
\end{proof}

Since a~random cubic graph asymptotically almost surely contains only $o(n)$
cycles shorter than a~fixed integer $g$~\cite{bib-wormald-shortcycles},
the~lower bound on the~size of an~edge-cut also translates to random cubic graphs.
\begin{corollary}
\label{cor-cut-random}
A~random $n$-vertex cubic graph asymptotically almost surely contains an~edge-cut of size at least $\cutsize n - o(n)$.
\end{corollary}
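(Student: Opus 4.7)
The plan is to combine Corollary~\ref{cor-cut-subcubic} with the standard reduction from random cubic graphs to cubic graphs of large girth sketched in the introduction. Fix $g_0$ as provided by Theorem~\ref{thm-cut-dist}. By the theorem of Wormald~\cite{bib-wormald-shortcycles} quoted in the introduction, a random $n$-vertex cubic graph $G$ asymptotically almost surely contains only $o(n)$ cycles of length less than $g_0$. Let $S$ be a set obtained by selecting one vertex from each such short cycle; then $|S|=o(n)$, and removing $S$ destroys every cycle of length less than $g_0$, since removing vertices cannot create new cycles.

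Set $G' := G - S$. Since each removed vertex takes with it at most three edges, $G'$ has maximum degree at most three, girth at least $g_0$, and at least
\[ 3n/2 - 3|S| = 3n/2 - o(n) \]
edges. Applying Corollary~\ref{cor-cut-subcubic} to $G'$ yields an edge-cut $C \subseteq E(G')$ of size at least
\[ \cutprob \cdot \bigl(3n/2 - o(n)\bigr) = \cutsize\, n - o(n). \]

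Finally, any bipartition of $V(G')$ realizing $C$ extends to a bipartition of $V(G)$ by placing the vertices of $S$ on either side arbitrarily; every edge of $C$ has both endpoints in $V(G')$ and thus still crosses the resulting bipartition of $V(G)$. Hence $G$ contains an edge-cut of size at least $\cutsize\, n - o(n)$, which is what the corollary claims. There is no real obstacle in this argument, as it is a routine transfer of the deterministic bound via a well-known reduction; the only point requiring a moment of care is verifying that both the number of removed vertices and the number of removed edges are $o(n)$, so that the asymptotic cut bound for $G'$ carries back to $G$ without loss.
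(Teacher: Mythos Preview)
Your proof is correct and follows essentially the same approach as the paper: remove $o(n)$ vertices to destroy all short cycles, apply Corollary~\ref{cor-cut-subcubic} to the resulting subcubic graph with at least $3n/2 - o(n)$ edges, and then extend the bipartition back to $G$. The only cosmetic differences are that you spell out the choice of $S$ (one vertex per short cycle) and place $S$ arbitrarily, whereas the paper puts all removed vertices on one side; neither affects the argument.
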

\begin{proof}
Again, fix $g_0$ to be the constant given by Theorem~\ref{thm-cut-dist} and
let $G$ be a~randomly chosen $n$-vertex cubic graph.
The~results of~\cite{bib-wormald-shortcycles} imply then we can a.a.s. 
remove $o(n)$ vertices and obtain a~subgraph $G'$ with girth at least $g_0$.

Therefore, $G'$ has at least $1.5n - o(n)$ edges and by Corollary~\ref{cor-cut-subcubic},
there exists an edge-cut $C'$ in $G'$ of size at least $\cutsize n - o(n)$. Suppose that 
$X\subseteq V(G')$ is one of the sides of $C'$
and let $Y$ be the vertices removed from $G$. The edge-cut in $G$ with one side being $X \cup Y$
has size at least $\cutsize n - o(n)$.
\end{proof}

The~last corollary relates our results to the~problem of fractional coverings the~edges with edge-cuts.
We show how to construct from the~probability distribution given by Corollary~\ref{cor-cut-subcubic} a~fractional cut covering.
\begin{corollary}
\label{cor-cut-frac}
There exists an absolute constant $g_0$ such that 
every $n$-vertex graph $G$ with maximum degree at most three and girth at least
$g_0$ has the~fractional cut covering number at most $\cutcover$. 
\end{corollary}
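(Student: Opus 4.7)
The plan is to convert the probability distribution on edge-cuts provided by Corollary~\ref{cor-cut-subcubic} directly into a fractional cut covering by a uniform rescaling. Recall that a probability distribution on edge-cuts is nothing but an assignment of non-negative weights to edge-cuts whose total mass equals one; the hypothesis that every edge belongs to a random cut with probability at least $\cutprob$ translates to saying that for each edge $e$, the total weight of the cuts containing $e$ is at least $\cutprob$.

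Concretely, I would first fix $g_0$ to be the constant from Corollary~\ref{cor-cut-subcubic}, and apply that corollary to $G$ to obtain a probability distribution $\D$ on edge-cuts of $G$ such that each edge $e \in E(G)$ satisfies
\[
\sum_{\substack{C \text{ cut of } G\\ e \in C}} \D(C) \;\ge\; \cutprob.
\]
Now I would define a weighting $w$ on the edge-cuts of $G$ by $w(C) := \D(C)/\cutprob$ for every cut $C$. By the displayed inequality, $\sum_{C \ni e} w(C) \ge 1$ for every edge $e$, so $w$ is a valid fractional cut covering of $G$.

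It remains only to bound the total weight: since $\D$ is a probability distribution,
\[
\sum_{C} w(C) \;=\; \frac{1}{\cutprob} \sum_{C} \D(C) \;=\; \frac{1}{\cutprob} \;\le\; \cutcover,
\]
where the last inequality is a direct numerical check (namely $1/\cutprob \le \cutcover$). This shows that the fractional cut covering number of $G$ is at most $\cutcover$, as desired.

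There is no real obstacle here; the statement is essentially a restatement of Corollary~\ref{cor-cut-subcubic} in the language of fractional cut coverings, and the only ``work'' is to verify the arithmetic inequality $1/\cutprob \le \cutcover$ and to observe that the linear rescaling preserves non-negativity and turns the pointwise lower bound $\cutprob$ into the pointwise lower bound $1$ required by a fractional cover.
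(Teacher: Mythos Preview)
Your proof is correct and matches the paper's own argument essentially verbatim: both take the probability distribution from Corollary~\ref{cor-cut-subcubic}, rescale each cut's probability by $1/\cutprob$, and observe that the result is a fractional cut covering of total weight $1/\cutprob \le \cutcover$. There is nothing to add.
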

\begin{proof}
Fix $g_0$ to be the constant given by Theorem~\ref{thm-cut-dist} and
consider the~probability distribution on edge-cuts in $G$ given by Corollary~\ref{cor-cut-subcubic}.
If the~probability of an~edge-cut $C$ to be drawn in this distribution is $p(C)$, assign $C$ weight $p(C)/ \cutprob$.
It is straightforward to verify that we have obtained a~fractional cut covering of weight $1 / \cutprob \le \cutcover$.
\end{proof}

\section{Structure of the~proof}
Our proof is inspired by~the~method which was developed by Lauer and Wormald in~\cite{bib-lauer}
for finding large independent sets in regular graphs with large girth.
This method was then extended by Hoppen~\cite{bib-hoppen}, who
improved the lower bound for independent sets and also proved a lower bound
for induced forests. The latter result can also be found in~\cite{bib-forest}.

In~order to~prove Theorem~\ref{thm-cut-dist}, we design a~randomized procedure for obtaining an~\mbox{edge-cut}
which resembles the procedure used in~\cite{bib-wormald-cut}. The~main difference between our procedure and the procedure from~\cite{bib-wormald-cut}
is that our procedure finds an edge-cut whose parts have slightly different sizes, while the procedure from~\cite{bib-wormald-cut}
finds an edge-cut whose parts have the same size. Surprisingly, at least at the first glance, this edge-cut constructed in an 
asymmetric way is larger than an edge-cut from~\cite{bib-wormald-cut}.

The~key tool for our analysis is the~independence lemma
(Lemma~\ref{lem-cut-indep}) which is given in~Section~\ref{sect-cut-indep}. This lemma is used to~simplify the~recurrence
relations appearing in the~analysis. The~recurrences describing the~behavior of~the~randomized procedure are derived in~Section~\ref{sect-cut-rec}.
The~actual performance of~the~procedure is based on setting up the~parameters of~the~procedure and solving the~recurrences numerically.
This is discussed in Section~\ref{sect-cut-param}.

The~sought probability distribution is obtained by processing a~cubic graph
$G=(V,E)$ by the~procedure which produces an edge-cut in it.
$G$ is processed in a~fixed number of rounds~$K$ and
the~required assumption on the~girth of $G$ will depend only on the~number~$K$.
We will iteratively construct two disjoint subsets $R \subseteq V$ and $B \subseteq V$;
the~vertices contained in $R$ are referred to as red vertices and those in $B$ as blue ones.
The~aim of the~procedure is to maximize the~number of red-blue edges.
The~vertices that are neither red nor blue will be called white.

All vertices are initially white.
In every round, each white vertex is recolored to red or blue with a~certain probability
depending on the~number of its red and blue neighbors, as well as on the~number of current round.
Once a~vertex is colored red or blue, its color stays the~same in all the~remaining rounds
of~the~procedure.

\section{Detailed description}
\label{sect-cut-model}
We now describe the~randomized procedure in more detail.
We first introduce some notation.
Let $I_j := \left\{ (r,b) : r \in \Nat, b \in \Nat, r+b\le j \right\}$,
i.e., the~set $I_j$ contains all pairs $r$ and $b$ of non-negative integers such that
$r+b\le j$. 
For example, $I_2 = \left\{ (0,0), (0,1), (1,0), (1,1), (2,0), (0,2) \right\}$.
Note that $|I_j|=\binom{j + 2}{2}$.
Let $G=(V,E)$ be a~cubic graph and $v$ a~vertex of $G$. Throughout the
analysis, $r(v)$ will refer to the~number of red neighbors of $v$ and $b(v)$
to the~number of its blue neighbors. Therefore, $3-r(v)-b(v)$
is the~number of the~white neighbors of $v$. If the~vertex $v$ is clear from the
context, we just use $r$ and $b$ instead of $r(v)$ and $b(v)$.

Our randomized procedure is parametrized by the~following parameters:
\begin{itemize}
\item an~integer $K$,
\item probabilities $P_k^{r,b}(W)$ for all $k \in [K]$ and $(r,b) \in I_3$ ,
\item probabilities $P_k^{r,b}(R)$ for all $k \in [K]$ and $(r,b) \in I_3$ and
\item probabilities $P_k^{r,b}(B)$ for all $k \in [K]$ and $(r,b) \in I_3$ .
\end{itemize}
We require that $P_k^{r,b}(W) + P_k^{r,b}(R) + P_k^{r,b}(B) = 1$
for all $k \in [K]$ and $(r,b) \in I_3$.
The precise values of these probabilities will be defined in Section~\ref{sect-cut-param}.
\smallskip

The~integer $K \in \Nat$ denotes the~number of rounds that are performed.
Throughout the~procedure, vertices of the~input graph $G$ have one of the~three colors: white~(W),
red~(R) and blue~(B).
Let $W_k \subseteq V(G)$ denote the~set of~white vertices after the~$k$-th round.
Analogously, we define $R_k$ and $B_k$ as the~sets of~red vertices and blue vertices, respectively. 
As we have already mentioned, at the~beginning of the~process
$W_0 := V, R_0 := \emptyset$ and $B_0 := \emptyset$.
For $(r,b) \in I_3$ we define $W_k^{r,b} \subseteq W_k$ to be the~set of white vertices with exactly $r$ red
neighbors and $b$ blue neighbors. Hence the~sets $W_k^{r,b}$ forms a~partition of~$W_k$ for every $k$.
Note that $W_0^{0,0}=V$ and $W_0^{r,b} = \emptyset$ for all $(r,b) \in I_3 \setminus \left\{ (0,0)\right\}$.

Consider the~coloring of $G$ obtained after the~$k$-th round. The~$(k+1)$-th round
of the~procedure is performed as follows.
Let $v$ be a~vertex from $W_k^{r,b}$. With probability $P_{k+1}^{r,b}(R)$ we
change the~color of $v$ to red, with probability $P_{k+1}^{r,b}(B)$ we recolor
it to blue, and with probability $P_{k+1}^{r,b}(W)$ it remains white.
If $v$ is after the~$k$-th round colored red or blue, it will not change its color
during the~$(k+1)$-th round.

Before we can proceed further, we have to introduce some additional notation.
For a~vertex $v \in V(G)$ let $T^d_v$ denote the~subgraph of $G$ induced by vertices
at the~distance from $v$ at most $d$. Observe that if the~girth of $G$ is larger than $2d+1$,
then the~subgraph $T^d_v$ is a~tree.

We show that if the~girth of $G$ is sufficiently large, then the~probabilities that after the~$k$-th round
a~vertex $v$ has white, red or blue color, respectively, do not depend on the~choice of $v$.
To do so, let is start with the~following proposition.
\begin{proposition}
\label{prop-cut-ternarylike}
Let $G$ be a~cubic graph 
and $v$ a~vertex of $G$.
For every $k\in [K]$ the~probability that the~subgraph $T^{K-k}_v$ has 
a~certain coloring after the~$k$-th round is determined by the~coloring of
$T^{K-k+1}_v$ after the~$(k-1)$-th round.
\end{proposition}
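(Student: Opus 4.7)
The plan is to prove the proposition by a direct unrolling of the procedure's definition, observing that each vertex's recoloring in round $k$ is a local random choice that depends only on its immediate neighborhood after round $k-1$.

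First I would fix an arbitrary coloring $c$ of $T^{K-k+1}_v$ after round $k-1$ and an arbitrary coloring $c'$ of $T^{K-k}_v$, and show that $\Pr{T^{K-k}_v \text{ has coloring } c' \text{ after round } k \cond T^{K-k+1}_v \text{ has coloring } c \text{ after round } k-1}$ depends only on $c$ and $c'$. The crucial geometric observation is that every vertex $u \in V(T^{K-k}_v)$ lies at distance at most $K-k$ from $v$, so each of its (at most three) neighbors in $G$ lies at distance at most $K-k+1$ from $v$, and therefore belongs to $T^{K-k+1}_v$. Hence, given the coloring $c$, we already know the color of $u$ after round $k-1$ and the colors of all neighbors of $u$ after round $k-1$; in particular, if $u$ is white under $c$, then the values $r(u)$ and $b(u)$ are determined, and so is the triple of recoloring probabilities $P_k^{r(u),b(u)}(W), P_k^{r(u),b(u)}(R), P_k^{r(u),b(u)}(B)$.

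Next I would invoke the independence of the recoloring decisions. By the description in Section~\ref{sect-cut-model}, in round $k$ each white vertex is recolored independently according to the probabilities $P_k^{r,b}(\cdot)$ determined by the configuration after round $k-1$, and non-white vertices retain their colors deterministically. Consequently, the conditional probability of obtaining coloring $c'$ on $T^{K-k}_v$ factorizes as the product, over vertices $u \in V(T^{K-k}_v)$, of the individual transition probabilities from the color of $u$ under $c$ to the color of $u$ under $c'$ (where this factor is $1$ if $u$ is non-white under $c$ and $c'$ agrees, and $0$ if $u$ is non-white under $c$ but $c'$ assigns a different color). Each factor is a function of $c$ alone (together with $c'$), so the entire product depends only on $c$ and $c'$, as required.

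There is no real technical obstacle; the only point that deserves explicit care is the distance bound ensuring that every neighbor used in computing $r(u), b(u)$ lies inside $T^{K-k+1}_v$, and the observation that at this stage of the argument we do not need the girth hypothesis — the statement is simply about conditional distributions along the process, not about the tree structure of $T^{K-k+1}_v$. The girth assumption will only enter when one subsequently uses the proposition to argue that the marginal distribution of the color of $v$ after round $k$ is independent of $v$.
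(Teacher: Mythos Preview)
Your proposal is correct and follows essentially the same approach as the paper's own proof: the paper simply observes that the color of any $u\in T^{K-k}_v$ after round $k$ depends only on the colors of $u$ and its neighbors after round $k-1$, and that all such neighbors lie in $T^{K-k+1}_v$. Your write-up expands this into an explicit factorization of the conditional probability and notes that the girth hypothesis is not yet needed, but the underlying argument is identical.
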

\begin{proof}
The~color of a~vertex $u \in T^{K-k}_v$ after the~$k$-th round depends only
on the~colors of $u$ and its neighbors after the~$(k-1)$-th round.
Since all the~neighbors of $u$ are contained in $T^{K-k+1}_v$, the~proposition follows.
\end{proof}
Suppose that the~girth of $G$ is at least $2K$. For any $k \in [K]$ the
structure of a~subgraph $T^{K-k}_v$ does not depend on the~choice of $v$, i.e.,
it is always a~tree with all inner vertices of degree three. Therefore, by
a~simple inductive argument on $k$ together with Proposition~\ref{prop-cut-ternarylike}, we conclude
that all the~following probabilities do not depend on the~choice of $v$:
\begin{equation*}
w_k := \Pr{v \in W_k} \; , \quad r_k := \Pr{v \in R_k} \; , \quad b_k := \Pr{v \in B_k} \tecka
\end{equation*}
Analogously, for any $k \in [K-1]$ and $(r,b) \in I_3$, the~probability that after the~$k$-th round a~vertex $v$ 
is white and has $r$ red neighbors and $b$ blue neighbors does not depend on the~choice of $v$ as well.
Therefore, we can define
\begin{equation*}
w^{r,b}_k := \Pr{v \in W^{r,b}_k \cond v \in W_k} \tecka
\end{equation*}

If the~girth of $G$ is at least $2K+1$, 
the~same reasoning as before yields the~following. 
The~probability that for an~edge $uv \in E(G)$ either $u$ is
red and $v$ is blue after the~$k$-th round, or $v$ is red and $u$ is blue after
the~$k$-th round does not depend on the~choice of $uv$. 
This probability will be denoted by
\begin{equation*}
p_k := \Pr{\left(u \in R_k \land v \in B_k\right) \lor \left(u \in B_k \land v \in R_k\right)} \tecka
\end{equation*}

\section{Independence lemma}
\label{sect-cut-indep}
In this section we present a~key tool we used in the~analysis
of the~randomized procedure. In general, our analysis follows the~approach used
in~\cite{bib-hoppen}.

If $G$ is a~cubic graph with girth at least
$2K+1$, $uv$ is an edge of $G$ and $d$ is an integer between $0$ and $K-1$,
$T_{v,u}^d$ denotes the~component of $T_v^d - u$ containing
the~vertex $v$. We refer to $v$ as to the~root of $T_{v,u}^d$. From the
assumption on the~girth it follows that all the~subgraphs $T_{v,u}^d$ are
isomorphic to the~same rooted binary tree $\T^d$ of depth $d$. 

Let $k \in [K]$. For a~set $V' \subseteq V(G)$ let $c_k(V')$ denote
the~coloring of vertices $V'$ after the~$k$-th round. 
The~set of all colorings of $\T^{K-k}$ such that the~root of the~tree is white is denoted by $\C_k$.
Observe that by the~girth assumption and Proposition~\ref{prop-cut-ternarylike},
for any $\gamma \in \C_k$ the~probability $\Pr{c_k\left(T_{v,u}^{K-k}\right) =
\gamma}$ does not depend on the~edge $uv$. 

We are ready to prove the~main lemma of this section.
\begin{lemma}[Independence lemma]
\label{lem-cut-indep}
Consider the~randomized procedure with parameters $K$ and
$P_i^{r,b}(C)$, where $i\in [K], (r,b)\in I_3$ and $C \in \{W,R,B\}$.
Let $G$ be a~cubic graph with girth at least $2K+1$, $uv$ an~edge of $G$,
$k$ an~integer smaller than $K$ and $\gamma_u$ and $\gamma_v$ two colorings from $\C_k$.
Conditioned by the~event $uv \subseteq W_k$, the~events
$c_k\left(T^{K-k}_{v,u}\right)=\gamma_v$ and
$c_k\left(T^{K-k}_{u,v}\right)=\gamma_u$ are independent.
In other words, the~probabilities 
\begin{equation}
\label{prob-cut-indep-first}
\Pr{c_k\left(T^{K-k}_{v,u}\right)=\gamma_v \cond uv \subseteq W_k}
\end{equation}
and
\begin{equation}
\label{prob-cut-indep-sec}
\Pr{c_k\left(T^{K-k}_{v,u}\right)=\gamma_v \cond v \in W_k \land c_k\left(T^{K-k}_{u,v}\right)=\gamma_u}
\end{equation}
are equal.
\end{lemma}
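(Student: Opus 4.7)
The plan is to prove the independence by induction on $k$, from $k = 0$ up to $k = K-1$. For the base case $k = 0$, every vertex is white after round $0$, so both colorings $c_0(T^K_{v,u})$ and $c_0(T^K_{u,v})$ are deterministic (the all-white colorings), the conditioning event $uv \subseteq W_0$ has probability one, and the desired independence holds trivially.

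For the inductive step, assume the lemma at round $k$ and deduce it at round $k+1$. The central observation is that, by Proposition~\ref{prop-cut-ternarylike} applied separately to each side, the coloring $c_{k+1}(T^{K-k-1}_{v,u})$ is determined by the coloring $c_k(T^{K-k}_{v,u})$, the color of $u$ at round $k$ (the only neighbor of a vertex of $T^{K-k-1}_{v,u}$ lying outside $T^{K-k}_{v,u}$, namely the neighbor $u$ of the root $v$), and the independent recoloring choices made at round $k+1$ for vertices of $T^{K-k-1}_{v,u}$; an analogous statement holds on the other side. Since the girth is at least $2K+1$, the trees $T^{K-k}_{v,u}$ and $T^{K-k}_{u,v}$ are vertex-disjoint, so the round-$(k+1)$ random choices driving the two transitions are disjoint collections of independent random variables. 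Moreover, a coloring $\gamma_v' \in \C_{k+1}$ has root $v$ white, which forces its round-$k$ \emph{parent} $\mu_v$ to lie in $\C_k$, and analogously for $\gamma_u'$.

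I then expand $\Pr{c_{k+1}(T^{K-k-1}_{v,u}) = \gamma_v',\, c_{k+1}(T^{K-k-1}_{u,v}) = \gamma_u'}$ as a double sum over $(\mu_v, \mu_u) \in \C_k \times \C_k$ of $\Pr{c_k(T^{K-k}_{v,u}) = \mu_v,\, c_k(T^{K-k}_{u,v}) = \mu_u}$ multiplied by one-step transition probabilities $f_v(\mu_v, \gamma_v') \cdot f_u(\mu_u, \gamma_u')$; by the observations above, each factor $f$ depends only on its own side and on the fact that the opposite endpoint is white at round $k$. The inductive hypothesis factorizes each joint probability at round $k$, so the double sum collapses to $\Pr{uv \subseteq W_k} \cdot F_v(\gamma_v') \cdot F_u(\gamma_u')$, with $F_v$ and $F_u$ depending on only one side. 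Summing this identity over $\gamma_u'$, over $\gamma_v'$, and over both yields analogous product expressions for $\Pr{c_{k+1}(T^{K-k-1}_{v,u}) = \gamma_v',\, u \in W_{k+1}}$, $\Pr{c_{k+1}(T^{K-k-1}_{u,v}) = \gamma_u',\, v \in W_{k+1}}$, and $\Pr{uv \subseteq W_{k+1}}$; using that $\gamma_v' \in \C_{k+1}$ itself forces $v \in W_{k+1}$, a direct division then yields equality of the conditional probabilities~(\ref{prob-cut-indep-first}) and~(\ref{prob-cut-indep-sec}) at round $k+1$.

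The delicate point to pin down is precisely that the one-step transition $f_v(\mu_v, \gamma_v')$ depends only on $\mu_v$ and the color of $u$ at round $k$, with no residual dependence on structure further out on the $u$-side. This is where both the girth bound (disjointness of the round-$(k+1)$ recoloring randomness) and the local tree-likeness of neighborhoods are essential; once it is carefully justified, the remainder is a routine rearrangement of conditional probabilities, and one simultaneously obtains the auxiliary fact that, conditionally on $uv \subseteq W_k$, the events $u \in W_{k+1}$ and $v \in W_{k+1}$ are themselves independent, which is the consistency check needed when summing out.
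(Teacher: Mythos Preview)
Your proposal is correct and follows essentially the same inductive argument as the paper. The only cosmetic differences are that the paper anchors the induction at $k=1$ (using that round~1 colors vertices independently) rather than $k=0$, and the paper expands the two conditional probabilities~(\ref{prob-cut-indep-first}) and~(\ref{prob-cut-indep-sec}) separately and cancels a common $\gamma'_u$-sum, whereas you factorize the unconditional joint probability first and then take ratios---but this is the same computation organized differently, resting on the same locality observation and the same use of the induction hypothesis.
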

\begin{proof}
The~proof proceeds by induction on $k$. After the~first round each vertex has a~color~$C$
with probability $P_1^{0,0}(C)$ independently of the colors of the other vertices. Hence, the~claim holds for $k=1$.

Assume now that $k>1$. By the~definition of the~conditional probability and the~fact that the~event $uv \subseteq W_k$
immediately implies that the~event $uv \subseteq W_{k-1}$ occurs, (\ref{prob-cut-indep-first}) is equal to
\begin{equation}
\label{prob-cut-indep-first_ex}
\frac{ \Pr{c_k\left(T^{K-k}_{v,u}\right)=\gamma_v \land u \in W_k \cond uv \subseteq W_{k-1} } }
     { \Pr{uv \subseteq W_k \cond uv \subseteq W_{k-1}} }
\tecka
\end{equation}
Analogously, (\ref{prob-cut-indep-sec}) is equal to
\begin{equation}
\label{prob-cut-indep-sec_ex}
\frac{ \Pr{c_k\left(T^{K-k}_{v,u}\right)=\gamma_v \land c_k\left(T^{K-k}_{u,v}\right)=\gamma_u \cond uv \subseteq W_{k-1} } }
     { \Pr{v \in W_k \land c_k\left(T^{K-k}_{u,v}\right)=\gamma_u \cond uv \subseteq W_{k-1} } }
\tecka
\end{equation}
We now expand the~numerator of~(\ref{prob-cut-indep-first_ex}).
{\small \begin{align*}
&\sum_{\gamma'_u \in \C_{k-1}} \sum_{\gamma'_v \in \C_{k-1}}
\Pr{c_{k-1}\left(T^{K-k+1}_{u,v}\right) = \gamma'_u \cond uv \subseteq W_{k-1}} \\
&\times \Pr{c_{k-1}\left(T^{K-k+1}_{v,u}\right) = \gamma'_v \cond v \in W_{k-1} \land c_{k-1}\left(T^{K-k+1}_{u,v}\right) = \gamma'_u} \\
&\times \Pr{u \in W_k \cond c_{k-1}\left(T^{K-k+1}_{u,v}\right) = \gamma'_u \land c_{k-1}\left(T^{K-k+1}_{v,u}\right) = \gamma'_v} \\
&\times \Pr{c_k\left(T^{K-k}_{v,u}\right)=\gamma_v \cond c_{k-1}\left(T^{K-k+1}_{u,v}\right) = \gamma'_u \land c_{k-1}\left(T^{K-k+1}_{v,u}\right) = \gamma'_v 
  \land u \in W_k}
\tecka
\end{align*} }
By the~induction hypothesis, for any two colorings $\gamma'_u,\gamma'_v \in \C_{k-1}$ the~probabilities 
\begin{equation*}
\Pr{c_{k-1}\left(T^{K-k+1}_{v,u}\right) = \gamma'_v \cond v \in W_{k-1} \land c_{k-1}\left(T^{K-k+1}_{u,v}\right) = \gamma'_u}
\end{equation*}
and
\begin{equation*}
\Pr{c_{k-1}\left(T^{K-k+1}_{v,u}\right) = \gamma'_v \cond uv \subseteq W_{k-1}}
\end{equation*}
are equal.

Since the~new color of $u$ is determined only by the~colors of the~neighbors of $u$, it follows that the~probabilities
\begin{equation*}
\Pr{u \in W_k \cond c_{k-1}\left(T^{K-k+1}_{u,v}\right) = \gamma'_u \land c_{k-1}\left(T^{K-k+1}_{v,u}\right) = \gamma'_v}
\end{equation*}
and
\begin{equation*}
\Pr{u \in W_k \cond c_{k-1}\left(T^{K-k+1}_{u,v}\right) = \gamma'_u \land v \in W_{k-1}}
\end{equation*}
are also equal.

Analogously, for any vertex $w \in T^{K-k}_{v,u} \setminus \{v\}$ the~new color of $w$ does not depend on $\gamma'_u$ at all.
Applying the~same reasoning for $v$ yields that the~probabilities
\begin{equation*}
\Pr{c_k\left(T^{K-k}_{v,u}\right)=\gamma_v \cond c_{k-1}\left(T^{K-k+1}_{u,v}\right) = \gamma'_u \land c_{k-1}\left(T^{K-k+1}_{v,u}\right) = \gamma'_v 
  \land u \in W_k}
\end{equation*}
and
\begin{equation*}
\Pr{c_k\left(T^{K-k}_{v,u}\right)=\gamma_v \cond c_{k-1}\left(T^{K-k+1}_{v,u}\right) = \gamma'_v \land u \in W_{k-1}}
\end{equation*}
are equal as well.
Note that in the~last equality we have also used that the~random choices 
of new colors for two arbitrary vertices in the~$(k+1)$-th round are independent.

By changing the~order of summation, we conclude that the~numerator of~(\ref{prob-cut-indep-first_ex}) is equal to
\begin{align*}
\Bigg(\sum_{\gamma'_u \in \C_{k-1}} & \Pr{c_{k-1}\left(T^{K-k+1}_{u,v}\right) = \gamma'_u \cond uv \subseteq W_{k-1}} \\
\times & \; \Pr{u \in W_k \cond c_{k-1}\left(T^{K-k+1}_{u,v}\right) = \gamma'_u \land v \in W_{k-1}} \Bigg) \\
\times \Bigg(\sum_{\gamma'_v \in \C_{k-1}} & \Pr{c_{k-1}\left(T^{K-k+1}_{v,u}\right) = \gamma'_v \cond uv \subseteq W_{k-1}} \\
\times & \; \Pr{c_k\left(T^{K-k}_{v,u}\right)=\gamma_v \cond c_{k-1}\left(T^{K-k+1}_{v,u}\right) = \gamma'_v \land u \in W_{k-1}} \Bigg)
\tecka
\end{align*}

Along the~same lines, the~denominator of~(\ref{prob-cut-indep-first_ex}) is equal to
\begin{align*}
\Bigg(\sum_{\gamma'_u \in \C_{k-1}} & \Pr{c_{k-1}\left(T^{K-k+1}_{u,v}\right) = \gamma'_u \cond uv \subseteq W_{k-1}} \\
\times & \; \Pr{u \in W_k \cond c_{k-1}\left(T^{K-k+1}_{u,v}\right) = \gamma'_u \land v \in W_{k-1}} \Bigg) \\
\times \Bigg(\sum_{\gamma'_v \in \C_{k-1}} & \Pr{c_{k-1}\left(T^{K-k+1}_{v,u}\right) = \gamma'_v \cond uv \subseteq W_{k-1}} \\
\times & \; \Pr{v \in W_k \cond c_{k-1}\left(T^{K-k+1}_{v,u}\right) = \gamma'_v \land u \in W_{k-1}} \Bigg)
\tecka
\end{align*}

Canceling out the~sum over $\gamma'_u$ which is the~same in both numerator and denominator of~(\ref{prob-cut-indep-first_ex}),
we derive that~(\ref{prob-cut-indep-first}) is equal to
\begin{equation}
\label{prob-cut-indep-first_final}
\begin{aligned}
\Bigg(
\Sum_{\gamma'_v \in \C_{k-1}} & \Pr{c_{k-1}\left(T^{K-k+1}_{v,u}\right) = \gamma'_v \cond uv \subseteq W_{k-1}} \\
\times & \; \Pr{c_k\left(T^{K-k}_{v,u}\right)=\gamma_v \cond c_{k-1}\left(T^{K-k+1}_{v,u}\right) = \gamma'_v \land u \in W_{k-1}} \Bigg) \\
\times \Bigg( \Sum_{\gamma'_v \in \C_{k-1}} & \Pr{c_{k-1}\left(T^{K-k+1}_{v,u}\right) = \gamma'_v \cond uv \subseteq W_{k-1}} \\
\times & \; \Pr{v \in W_k \cond c_{k-1}\left(T^{K-k+1}_{v,u}\right) = \gamma'_v \land u \in W_{k-1}} \Bigg)^{-1}
\tecka
\end{aligned}
\end{equation}

We apply the~same trimming to the~numerator and denominator of~(\ref{prob-cut-indep-sec_ex}).
The~numerator is first expanded to
\begin{align*}
\Bigg(\sum_{\gamma'_u \in \C_{k-1}} & \Pr{c_{k-1}\left(T^{K-k+1}_{u,v}\right) = \gamma'_u \cond uv \subseteq W_{k-1}} \\
\times & \; \Pr{c_k\left(T^{K-k}_{u,v}\right)=\gamma_u \cond c_{k-1}\left(T^{K-k+1}_{u,v}\right) = \gamma'_u \land v \in W_{k-1}} \Bigg) \\
\times \Bigg(\sum_{\gamma'_v \in \C_{k-1}} & \Pr{c_{k-1}\left(T^{K-k+1}_{v,u}\right) = \gamma'_v \cond uv \subseteq W_{k-1}} \\
\times & \; \Pr{c_k\left(T^{K-k}_{v,u}\right)=\gamma_v \cond c_{k-1}\left(T^{K-k+1}_{v,u}\right) = \gamma'_v \land u \in W_{k-1}} \Bigg)
\end{align*}
and the~denominator is then expanded to
\begin{align*}
\Bigg(\sum_{\gamma'_u \in \C_{k-1}} & \Pr{c_{k-1}\left(T^{K-k+1}_{u,v}\right) = \gamma'_u \cond uv \subseteq W_{k-1}} \\
\times & \; \Pr{c_k\left(T^{K-k}_{u,v}\right)=\gamma_u \cond c_{k-1}\left(T^{K-k+1}_{u,v}\right) = \gamma'_u \land v \in W_{k-1}} \Bigg) \\
\times \Bigg(\sum_{\gamma'_v \in \C_{k-1}} & \Pr{c_{k-1}\left(T^{K-k+1}_{v,u}\right) = \gamma'_v \cond uv \subseteq W_{k-1}} \\
\times & \; \Pr{v \in W_k \cond c_{k-1}\left(T^{K-k+1}_{v,u}\right) = \gamma'_v \land u \in W_{k-1}} \Bigg)
\tecka
\end{align*}

By canceling out the~sum over $\gamma'_u$, we obtain (\ref{prob-cut-indep-first_final}). Therefore the~expressions $(\ref{prob-cut-indep-first})$ 
and $(\ref{prob-cut-indep-sec})$ are equal.
\end{proof}

\section{Recurrence relations}
\label{sect-cut-rec}
In this section we derive recurrence relations for the~probabilities describing
the~behavior of the~randomized procedure.

Fix parameters $K$ and $P_k^{r,b}(C)$ for $k\in [K], (r,b)\in I_3$ and
$C\in\{W,R,B\}$. We will inductively show that the~probabilities describing
the~state of the~procedure after the~\mbox{$(k+1)$-th} round can be computed
using only the~probabilities describing the~state after the~\mbox{$k$-th}
round. This yields the~recurrence relations for the~probabilities, which is
the~main goal of this section.

We start with determining the~probabilities after the~initialization round.
It is easy to see that the~probabilities $r_1, b_1, w_1, p_1$ and $w_1^{r,b}$ are
\begin{align*}
& r_1 = P_1^{0,0}(R)\carka \\
& b_1 = P_1^{0,0}(B)\carka \\
& w_1 = 1-r_1-b_1\carka \\
& p_1 = 2\cdot P_1^{0,0}(R)\cdot P_1^{0,0}(B) \quad \mbox{and} \\
& w_1^{r,k}=\binom{3}{r} \binom{3-r}{b} \cdot \left(P_1^{0,0}(R)\right)^r \cdot \left(P_1^{0,0}(B)\right)^b
\cdot \left(1-P_1^{0,0}(R)-P_1^{0,0}(B)\right)^{3-r-b} \\ & \mbox{for } (r,b) \in I_3 
\tecka
\end{align*}

Next, we show how to compute the~probabilities $r_{k+1}, b_{k+1}$ and $w_{k+1}$ from $r_k, b_k, w_k$ and $w_k^{r,b}$.
We start with the~formula for $r_{k+1}$. If a~vertex $v$ is colored red after the~\mbox{$(k+1)$-th} round,
then after the~\mbox{$k$-th} round, it was either already colored red, or it was white, had $r$ red neighbors,
$b$ blue neighbors and it was recolored to red. The~latter happened with probability $P_{k+1}^{r,b}(R)$. 
The~probability of the~first event is $r_k$ and that of the~second event is $w_k \cdot w_k^{r,b} \cdot P_{k+1}^{r,b}(R)$.
This yields that
\begin{equation*}
r_{k+1}=r_k + w_k \cdot \sum_{(r,b)\in I_3}{w_k^{r,b} \cdot P_{k+1}^{r,b}(R)}\tecka
\end{equation*}
Analogously, we can compute 
\begin{equation*}
b_{k+1}=b_k + w_k \cdot \sum_{(r,b)\in I_3}{w_k^{r,b} \cdot P_{k+1}^{r,b}(B)}\carka
\end{equation*}
and finally $w_{k+1}$ is given by 
\begin{equation*}
w_{k+1}=1-r_{k+1}-b_{k+1}\tecka
\end{equation*}

Before we proceed with the~recurrences for $p_{k+1}$ and $w_{k+1}^{r,b}$, let us introduce some auxiliary
notation. All of the~following quantities are fully determined by $w_k^{r,b}$, but this notation will help to make 
the~formulas simpler.
We start with probability that a~vertex $v$ has white color after the
\mbox{$(k+1)$-th} round conditioned by the~event it had white color after the~\mbox{$k$-th}
round. This quantity will be denoted by $w_{\to k+1}$. It is straightforward to check that
\begin{equation*}
w_{\to k+1} := \Pr{v \in W_{k+1} \cond v \in W_k} = \sum_{(r,b) \in I_3}{w_k^{r,b} \cdot P_{k+1}^{r,b}(W)}
\tecka
\end{equation*}
Next, we consider the~probability that the~vertex $u$ is white after the~{$k$-th} round conditioned
by the~event that a~fixed neighbor $v$ of $u$ is white after the~{$k$-th} round. This will be denoted by $q_k^{W-W}$.
We claim that 
\begin{equation*}
q_k^{W-W} := \Pr{uv \subseteq W_k \cond v \in W_k} = \sum_{(r,b) \in I_2}{ \frac{3-r-b}{3} \cdot w_k^{r,b} } \;
\tecka
\end{equation*}
First observe that the events $v \in W_k^{r,b}$, where $(r,b)\in I_3$, form a partition of the event $v\in W_k$,
and for $(r,b) \in I_3\setminus I_2$ the probability that $u$ is white after the~{$k$-th} round is equal to zero.
Suppose that $v\in W_k^{r,b}$, i.e., it has $r$ red neighbors, $b$ blue neighbors (and $3-r-b$ white neighbors) after the~{$k$-th} round.
This happens with probability $w_k^{r,b}$. Since $u$ is a~fixed neighbor of $v$, it has white color after the~{$k$-th} round 
with probability $(3-r-b)/3$.

Finally, for a~color $C \in \{W,R,B\}$ and an~edge $e=uv$,
$q_{\to k+1}^{(C)}$ denotes the~probability that $u$ has the~color $C$ after the~{$(k+1)$-th} round conditioned by the~event
that both $u$ and $v$ were white after the~{$k$-th} round.
We infer from the~definition of the~conditional probability that
{\small \begin{align*}
q_{\to k+1}^{(R)} &:= \Pr{u \in R_{k+1} \cond uv \subseteq W_k} = 
\Sum_{(r,b) \in I_2}\frac{{ w_k^{r,b} \cdot (3-r-b) \cdot P_{k+1}^{r,b}(R) } }{ 3 \cdot q_k^{W-W} } \; \carka\\
q_{\to k+1}^{(B)} &:= \Pr{u \in B_{k+1} \cond uv \subseteq W_k} = 
\Sum_{(r,b) \in I_2}\frac{{ w_k^{r,b} \cdot (3-r-b) \cdot P_{k+1}^{r,b}(B) } }{ 3 \cdot q_k^{W-W} } \; \carka\\
q_{\to k+1}^{(W)} &:= \Pr{u \in W_{k+1} \cond uv \subseteq W_k} = 
\Sum_{(r,b) \in I_2}\frac{{ w_k^{r,b} \cdot (3-r-b) \cdot P_{k+1}^{r,b}(W) } }{ 3 \cdot q_k^{W-W} } \; \tecka
\end{align*} }

We are now ready to present the~remaining recurrences. Let us start with $p_{k+1}$, i.e., the~probability 
than an~edge $e=uv$ is red-blue after the~$(k+1)$-th round.
Note that once we color a~vertex $x$ with either red or blue color, 
the~color of $x$ in the~future rounds will stay the~same. Therefore, we can split the~contribution to $p_{k+1}$
to the~following four types.
\begin{enumerate}
\item $e \cap W_k=\emptyset$ : This event happens with probability $p_k$ and the~colors stay the~same.

\item $e \cap W_k=\{v\}$ : Suppose first that $u$ is blue after the~{$k$-th} round. The~probability that we have such configuration after \mbox{$k$-th}
round is $w_k \cdot \sum_{(r,b) \in I_3} {w_k^{r,b} \cdot b/3}$. In this case, the~edge $e$ become red-blue after
the~{$(k+1)$-th} round with probability $P_{k+1}^{r,b}(R)$.
Analogously, if $u$ is red after the~{$k$-th} round, the~contribution of this case is 
$w_k \cdot \sum_{(r,b) \in I_3} { w_k^{r,b} \cdot P_{k+1}^{r,b}(B) \cdot r / 3 }\tecka$

\item $e \cap W_k=\{u\}$ : This case is symmetric to the~previous one.

\item $e \subseteq W_k$ : The~probability that $v$ has white color after the~{$k$-th} round is $w_k$. 
With probability $w_k^{r,b} \cdot (3-r-b)/3$, $v$ has $r$ red neighbors, $b$ blue neighbors, and $u$ is white after the~{$k$-th} round.
The~probability that $v$ becomes red after the~{$(k+1)$-th} round is $P_{k+1}^{r,b}(R)$, and using the~independence lemma (Lemma~\ref{lem-cut-indep}) the
neighborhood of $u$ does not depend on the~colors of the~other neighbors of $v$. Therefore, the~probability that $u$ becomes blue after the~{$(k+1)$-th} round
is $q_{\to k+1}^{(B)}$. On the~other hand, the~probability that after the~{$(k+1)$-th} round $v$ becomes red and $u$ becomes blue is
$P_{k+1}^{r,b}(B) \cdot q_{\to k+1}^{(R)}$.
\end{enumerate}
The~analysis just presented yields that
\begin{align*}
p_{k+1} = p_k
& + \frac{w_k}{3} \cdot \sum_{(r,b) \in I_3} w_k^{r,b} \cdot P_{k+1}^{r,b}(R) \cdot \left( 2b+ (3-r-b) \cdot q_{\to k+1}^{(B)} \right) \\
& + \frac{w_k}{3} \cdot \sum_{(r,b) \in I_3} w_k^{r,b} \cdot P_{k+1}^{r,b}(B) \cdot \left( 2r+ (3-r-b) \cdot q_{\to k+1}^{(R)} \right)
\tecka
\end{align*}

We finish this section with the~recurrence relations for the~probabilities $w_{k+1}^{r,b}$.
Observe that
\begin{equation}
\label{prob-cut-rec-whitedegrees}
w_{k+1}^{r,b} = \frac{ \Pr{v \in W_{k+1}^{r,b}} }{ \Pr{v \in W_{k+1}} } = 
\frac{ \Pr{v \in W_{k+1}^{r,b} \cond v \in W_k} }{ \Pr{v \in W_{k+1} \cond v \in W_k} }
\tecka
\end{equation}
The~second equality holds because each of the~events $v \in W_{k+1}$ and $v \in W_{k+1}^{r,b}$
immediately implies that the~event $v \in W_k$ occurs. The~denominator of~(\ref{prob-cut-rec-whitedegrees}) is equal
to $w_{\to k+1}$, so it remains to derive the~formula for the~numerator.

Let $N_k^W(v)$ denote the~set of white neighbors of $v$ after the~{$k$-th} round.
Using the~same argument as for deriving the~formula for $p_{k+1}$, the
color after the~{$(k+1)$-th} round of a~white neighbor $u \in N_k^W(v)$ will be red
with probability $q_{\to k+1}^{(R)}$. Analogously, it will be blue with probability 
$q_{\to k+1}^{(B)}$ and white with probability $q_{\to k+1}^{(W)}$. 
By Lemma~\ref{lem-cut-indep} and the~fact that in all rounds we recolor each
white vertex independently of the~others, the~new color of a~neighbor $u_1 \in N_k^W(v)$
does not depend on the~new color of another neighbor $u_2 \in N_k^W(v)$.
Now consider the probability that a vertex $v$ is white and has $r$ red and $b$ blue
neighbors after the~{$(k+1)$-th} round, i.e., $v \in W_{k+1}^{r,b}$, conditioned by the~event
$v \in W_k^{\bar{r},\bar{b}}$, where $\bar{r} \le r$ and $\bar{b} \le b$.
This probability is denoted by $w_{\to k+1}^{\bar{r},\bar{b} \to r,b}$.
We claim that $w_{\to k+1}^{\bar{r},\bar{b} \to r,b}$ is equal to
{\small
\begin{equation*}
 P_{k+1}^{\bar{r},\bar{b}}(W) \cdot
  \binom{3-\bar{r}-\bar{b}}{r-\bar{r}} \binom{3-r-\bar{b}}{b-\bar{b}} 
  \cdot \left(q_{\to k+1}^{(R)}\right)^{r-\bar{r}} 
  \cdot \left(q_{\to k+1}^{(B)}\right)^{b-\bar{b}}
  \cdot \left(q_{\to k+1}^{(W)}\right)^{3-r-b}
\tecka
\end{equation*}}
Indeed, $v$ stays white after the~{$(k+1)$-th} round with probability $P_{k+1}^{\bar{r},\bar{b}}(W)$.
Next, fix two disjoint subsets $Y$ and $Z$ of $N_k^W(v)$ of sizes $r-\bar{r}$ and $b-\bar{b}$, respectively.
This can be done in $\binom{3-\bar{r}-\bar{b}}{r-\bar{r}} \binom{3-r-\bar{b}}{b-\bar{b}}$ ways.
The probability that all vertices in $Y$ will be red after the~{$(k+1)$-th} round is equal to
$\left(q_{\to k+1}^{(R)}\right)^{r-\bar{r}}$. Analogously, all vertices in $Z$ will be
blue after the~{$(k+1)$-th} with probability $\left(q_{\to k+1}^{(B)}\right)^{b-\bar{b}}$, and the vertices
in $N_k^W(v) \setminus \left(Y \cup Z\right)$ will be white after the~{$(k+1)$-th} round with probability
$\left(q_{\to k+1}^{(W)}\right)^{3-r-b}$.
The above claim and the definition of the conditional probability imply that
{\begin{equation*}
w_{k+1}^{r,b} = 
\left(\sum_{ \substack{\bar{r}\le r \\ \bar{b} \le b} }
  { w_k^{\bar{r},\bar{b}} \cdot w_{\to k+1}^{\bar{r},\bar{b} \to r,b} } \right)
\Bigg/ w_{\to k+1}
\end{equation*} }
for every $(r,b) \in I_3$.

\section{Setting up the parameters}
\label{sect-cut-param}
In this section we set up the~parameters in~the~randomized procedure.
In the~first round, we pick a~vertex with a~small probability $p_0$ and color it
either red or blue. The~next rounds of the~procedure are split into two phases,
which consist of $K_1$ and $K_2$ rounds, respectively.
Therefore, the~total number of rounds $K$ is equal to $K_1 + K_2 + 1$.

In the~rounds of the~first phase, with probability $p_B$ ($p_R$), where $p_R \ll p_B$,
we color a~vertex with exactly one red (blue) neighbor by blue (red). If
a~vertex has at least two neighbors of the~same color, we color it with the~other
color with probability one. In all the~other cases we do nothing.

With one exception, the~rounds of the~second phase are performed identically to the~rounds of the~first phase.
The~exception is that a~white vertex with one red, one blue and one white neighbor is colored red
with probability $p_{RB}/2$ or blue with probability $p_{RB}/2$. 
The~choice of $p_{RB}$ is such that $p_{RB} \ll p_R$.

Specifically, we set:
\begin{itemize}
\item $K := K_1 + K_2 + 1$,

\item $P_1^{0,0}(R) := p_0/2 \carka \; P_1^{0,0}(B) := p_0/2 \carka$

\item $P_k^{r,b}(R) := 1$ for $(r,b) \in I_3 \cap \{(r,b): b \ge 2\}$ for $k \in [2,\dots, K] \carka$
\item $P_k^{r,b}(B) := 1$ for $(r,b) \in I_3 \cap \{(r,b): r \ge 2\}$ for $k \in [2,\dots, K] \carka$

\item $P_k^{0,1}(R) := p_R \carka \; P_k^{1,0}(B) := p_B$ for $k \in [2,\dots, K] \carka$

\item $P_k^{1,1}(R) := p_{RB}/2 \carka \; P_k^{1,1}(B) := p_{RB}/2$ for $k \in [K_1+2,\dots,K] \carka$

\item $P_k^{r,b}(R) := 0$ for all the~other choices of $r$ and $b$,
\item $P_k^{r,b}(B) := 0$ for all the~other choices of $r$ and $b$ and
\item $P_k^{r,b}(W) := 1 - P_k^{r,b}(R) - P_k^{r,b}(B)$ for $(r,b) \in I_3$.

\end{itemize}

The~recurrences presented in this chapter were solved numerically using a computer program.
The~particular choice of parameters used in the~program was 
$p_0=\cutprgPinit, p_B=\cutprgPblue, p_R=\cutprgPred, p_{RB}=\cutprgPrb$, \mbox{$K_1=\cutprgKone$}
and \mbox{$K_2=\cutprgKtwo$} (and hence $K=\cutprgK$).

The~choice of $K_1$ was made in such a~way that at the~end of the~first phase, i.e., after 
the~$(K_1 + 1)$-th round, the~probability that a~vertex is white and has exactly one non-white
neighbor is less than $\cutprgTHOLD$. 
Analogously, the~choice of $K_2$ was made in a~way that at the~end of the~process, i.e., after the~$K$-th round
the~probability that a~vertex is white is less than $\cutprgTHOLD$.

The code of the C program used for the computation can be downloaded from {\tt http://iuuk.mff.cuni.cz/\~{}volec/cubic-cut/}.
The output of the program with all the values of variables $p_k, r_k, b_k, w_k$ and $w^{r,b}_k$ for $k\in[K]$ and $(r,b)\in I_3$
computed for the given choice of parameters can also be found on the web page.
For the floating-point calculations, the program uses the MPFR library for a high-precision floating-point calculations with 
correct rounding~\cite{bib-float-mpfr}.
We used the {\em running error analysis} method (see, e.g., Section 2.5.1
from~\cite{bib-float-db}, or Section 3.3 from~\cite{bib-float-higham}) to upper
bound the rounding error coming from the~representation of floating-point
numbers. Setting the length of the mantissa of all the floating-point variables to $\cutprgprec$, we upper bound the rounding error
for all $p_K, r_K$ and $b_K$ by $\cutprgerrfla < \cutprgerr$.

Solving the~recurrences for the~above choice of parameters we have obtained that \mbox{$p_K > \cutprob$}.
The probability that a vertex $v$ is colored red at the end of the process, i.e. $r_K$, is equal to $\cutred$
and the probability that $b_K$ is equal to $\cutblue$.
In Figures~\ref{fig-p}--\ref{fig-w01-w11}, we plot the evolution of the
probabilities $p_k$, $r_k$, $b_k$, $w_k$, $w^{0,0}_k$, $w^{0,1}_k$ and
$w^{1,1}_k$. The vertical dashed line in each figure correspond to the end of the first phase.
The probabilities $w^{0,2}_k$, $w^{0,3}_k$, $w^{1,0}_k$, $w^{1,2}_k$
$w^{2,0}_k$, $w^{2,1}_k$ and $w^{3,0}_k$ are less than $10^{-3}$ for every $k
\in [K]$.

The above choice of the parameters is not the best possible. In particular, setting smaller values for
the parameters $p_{RB}, p_R$ and $p_0$ would produce a slightly larger edge-cut at the cost of stronger assumption
on the required girth. On the other hand, computer experiments on random cubic graphs of size $\randsize$
suggest that optimizing the parameters of this procedure cannot obtain significantly better upper bound than $\randprob$.

\begin{figure}
  \begin{center}
    \epsfxsize=100mm
    \epsfbox{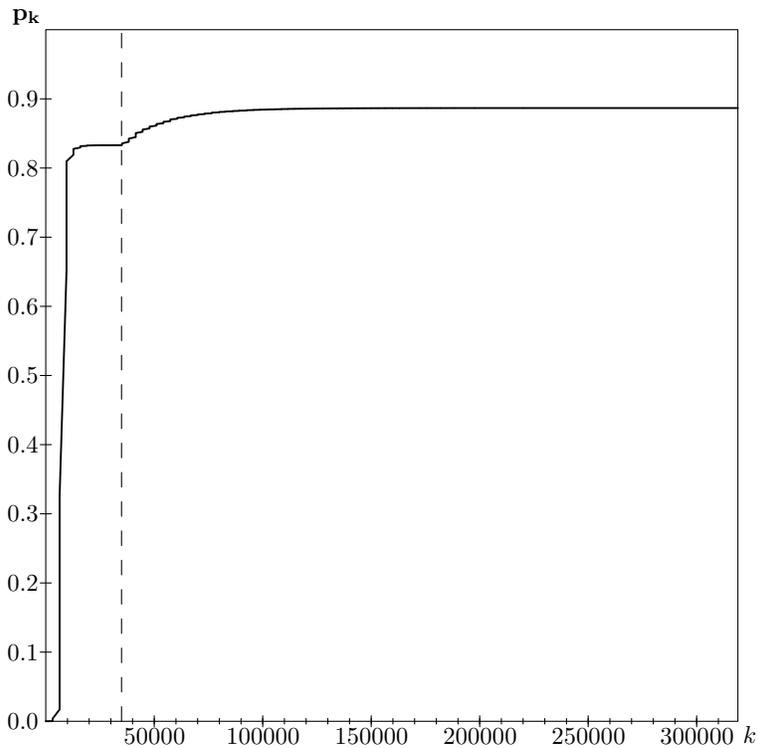}
  \end{center}
  \caption{Evolution of $p_k$}
  \label{fig-p}
\end{figure}

\begin{figure}
  \begin{center}
    \epsfxsize=135mm
    \epsfbox{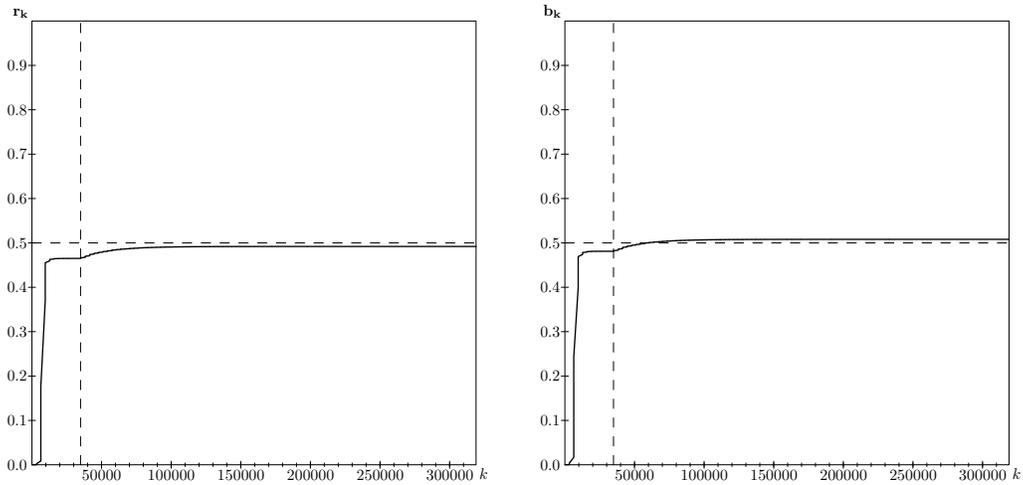}
  \end{center}
  \caption{Evolution of $r_k$ and $b_k$}
  \label{fig-r-b}
\end{figure}

\begin{figure}
  \begin{center}
    \epsfxsize=135mm
    \epsfbox{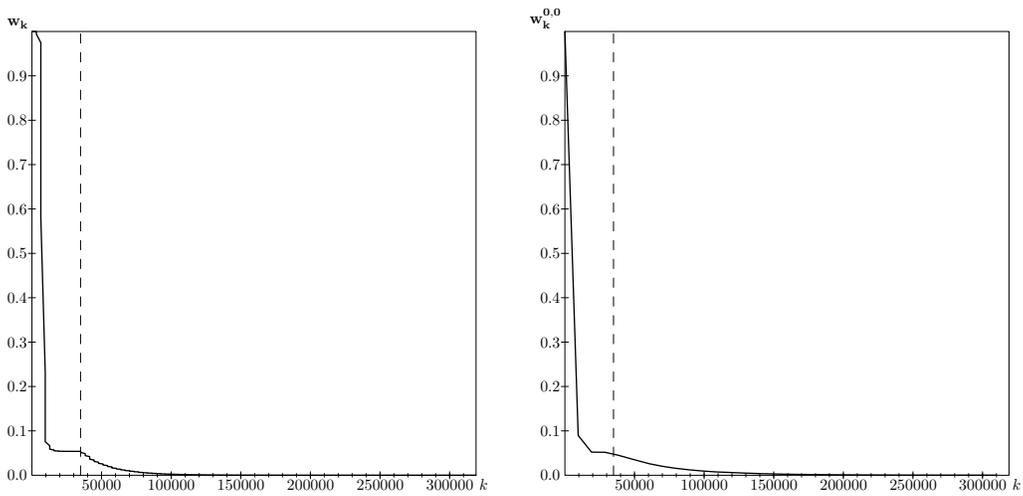}
  \end{center}
  \caption{Evolution of $w_k$ and $w^{0,0}_k$}
  \label{fig-w-w00}
\end{figure}

\begin{figure}
  \begin{center}
    \epsfxsize=135mm
    \epsfbox{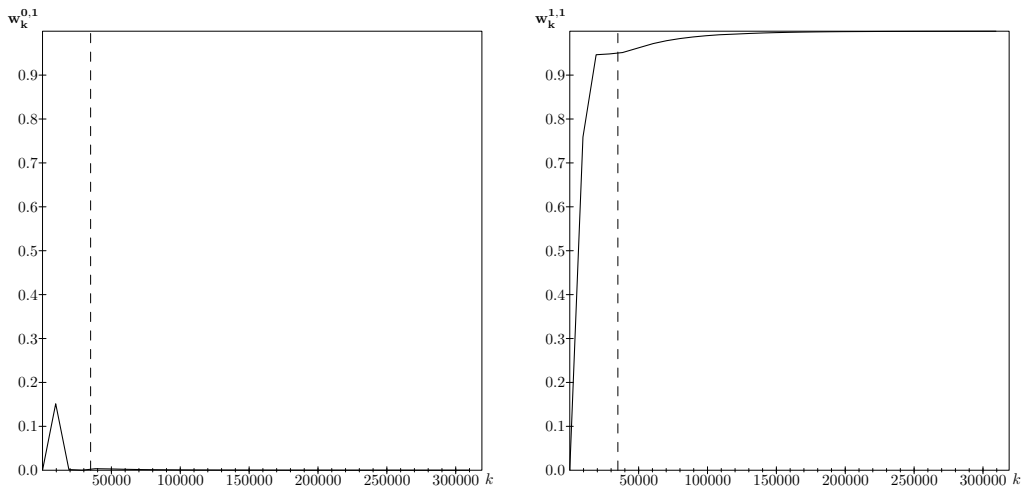}
  \end{center}
  \caption{Evolution of $w^{0,1}_k$ and $w^{1,1}_k$}
  \label{fig-w01-w11}
\end{figure}

The presented method is also applicable for $d$-regular graphs for $d\ge4$ analogously as 
the Hoppen's method~\cite{bib-hoppen} could be used for translating the results of
D{\' i}az, Do, Serna and Wormald~\cite{bib-wormald-cut4} and D{\' i}az, Serna and Wormald~\cite{bib-wormald-cutd}.

\paragraph{Acknowledgments}
The authors would like to thank Nick Wormald for pointing
out the reference~\cite{bib-lauer} and the anonymous referees
for careful reading the manuscript and their valuable
comments improving the presentation of the results.

\end{document}